\newcommand\supp{\mathop{\rm supp}}
\theoremstyle{plain} %text of this environment is typesetted in italics
\newtheorem{theorem}{\indent\sc Theorem}[section]
\newtheorem{lemma}[theorem]{\indent\sc Lemma}
\newtheorem{corollary}[theorem]{\indent\sc Corollary}
\newtheorem{proposition}[theorem]{\indent\sc Proposition}
\theoremstyle{definition} %text of this environment is typesetted in roman letters
\newtheorem{definition}[theorem]{\indent\sc Definition}
\newtheorem{remark}[theorem]{\indent\sc Remark}
\def\address#1#2{\begingroup
\noindent\parbox[t]{7.8cm}{%
\small{\scshape\ignorespaces#1}\par\vskip1ex
\noindent\small{\itshape E-mail address}%
\/: #2\par\vskip4ex}\hfill%
\endgroup}%
\title{Fractional series operators on $\mathbb{Z}^n$} %title of the paper
\author{
%
%\small{Dedicated to Professor Xxx Yyy on his sixtieth birthday} dedication if necessary
%
\textsc{Pablo Rocha} %names of authors
}
\date{} %leave empty
\begin{document}

\maketitle

%%%%%%%%%%%%%%% footnote %%%%%%%%%%%%%%%%
\footnote{ %2020 MSC numbers
2020 \textit{Mathematics Subject Classification}.
42B30, 42B25.
}
\footnote{ %key words and phrases
\textit{Key words and phrases}:
Discrete Hardy Spaces; Atomic Decomposition; Fractional Series
}
%\footnote{ %acknowledgment of support etc. if any
%$^{*}$Thanks.
%}
%%%%%%%%%%%%%%%%%%%%%%%%%%%%%%%%%%%%%%%%%

\begin{abstract}
For $0 \leq \alpha < n$ and $m \in \mathbb{N} \cap (1 - \frac{\alpha}{n}, \, \infty)$, we introduce a class of fractional 
series operators $T_{\alpha, m}$ defined on $\mathbb{Z}^n$ which are generated by certain $m$-invertible matrices with integer 
coefficients. In this note, we prove that $T_{\alpha, m}$ is a bounded operator $H^p(\mathbb{Z}^n) \to \ell^q(\mathbb{Z}^n)$ for 
$0 < p < \frac{n}{\alpha}$ and $\frac{1}{q} = \frac{1}{p} - \frac{\alpha}{n}$. This generalizes the results obtained by the author in 
[Acta Math. Hungar., 168 (1) (2022), 202-216].
\end{abstract}

\section{Introduction}

Given $0 \leq \alpha < n$, let $m \in \mathbb{N} \cap (1 - \frac{\alpha}{n}, \, \infty)$ and let $\alpha_1, ..., \alpha_m$ be $m$ positive constants such that $\alpha_1 + \cdot \cdot \cdot + \alpha_m = n - \alpha$. We define the discrete operator 
$T_{\alpha, m}$ on $\mathbb{Z}^n$ by
\begin{equation} \label{Tam}
(T_{\alpha, m} b)(j) = \sum_{i \neq A_k j \, : \, k=1, ..., m} \frac{b(i)}{|i - A_1 j|^{\alpha_1} \cdot \cdot \cdot |i - A_m j|^{\alpha_m}},
\,\,\,\,\,\, j \in \mathbb{Z}^n,
\end{equation}
where the $A_k$'s are invertible matrices of $M_n(\mathbb{Z})$ ($=$ the set of all matrices of degree $n$ with coefficients in 
$\mathbb{Z}$). For the case $\alpha = 0$, we also assume that $A_{k} - A_{l}$ is invertible if $1 \leq k \neq l \leq m$.

The case when $n=1$, $0 \leq \alpha <1$, $m=2$, $A_1 = 1$ and $A_2 = -1$ in (\ref{Tam}) was studied by the author in \cite{Rocha1}. 
We proved, using the atomic decomposition for $H^p(\mathbb{Z})$ given in \cite{Boza}, that such operator is bounded from $H^p(\mathbb{Z})$ into $\ell^q(\mathbb{Z})$ for $0 < p < \frac{1}{\alpha}$ and $\frac{1}{q} = \frac{1}{p} - \alpha$ (see Theorems 7 and 9 in \cite{Rocha1}).

We also observe that the operator (\ref{Tam}) is a generalization of the discrete Riesz potential on $\mathbb{Z}^n$. Indeed, for 
$0 < \alpha < n$, $m=1$ and $A_1 = Id$, we have that $T_{\alpha, 1} = I_{\alpha}$, where
\begin{equation} \label{Riesz potential}
(I_{\alpha}b)(j) = \sum_{i \in \mathbb{Z}^n \setminus \{ j \}} \frac{b(i)}{|i-j |^{n - \alpha}}, \,\,\,\,\,\, j \in \mathbb{Z}^n.
\end{equation}
Y. Kanjin and M. Satake in \cite{Kanjin} studied the discrete Riesz potential $I_{\alpha}$ for the case $n=1$ and proved the 
$H^p(\mathbb{Z}) \to H^q(\mathbb{Z})$ boundedness of $I_{\alpha}$, for $0 < p < \frac{1}{\alpha}$ and 
$\frac{1}{q} = \frac{1}{p} - \alpha$. To achieve this result, they furnished a molecular decomposition for elements of $H^p(\mathbb{Z})$.

Recently, by means of the atomic decomposition for $H^p(\mathbb{Z}^n)$ given in \cite{Carro}, the author in \cite{Rocha2} and \cite{Rocha3} studied the behavior of discrete Riesz potential on $H^p(\mathbb{Z}^n)$. More precisely, in \cite{Rocha2} we proved the $H^p(\mathbb{Z}^n) \to \ell^q(\mathbb{Z}^n)$ boundedness of $I_{\alpha}$ for 
$0 < p < \frac{n}{\alpha}$ and $\frac{1}{q} = \frac{1}{p} - \frac{\alpha}{n}$; in \cite{Rocha3}, on the range $\frac{n-1}{n} < p \leq 1$, 
we furnished a molecular decomposition for $H^p(\mathbb{Z}^n)$ analogous to the ones given by Y. Kanjin and M. Satake in \cite{Kanjin}, and obtained the $H^p(\mathbb{Z}^n) \to H^q(\mathbb{Z}^n)$ boundedness of $I_{\alpha}$ for $\frac{n-1}{n} < p < q \leq 1$.

In \cite{Rocha1}, we also showed that there exists $\epsilon \in (0, \frac{1}{3})$ such that, for every $0 \leq \alpha < \epsilon$, the 
operator $T_{\alpha, m}$ given by (\ref{Tam}), with $n=1$, $m=2$, $\alpha_1 = \alpha_2 = \frac{1-\alpha}{2}$, $A_1 = 1$ and $A_2 = -1$, is not bounded from $H^p(\mathbb{Z})$ into $H^q(\mathbb{Z})$ for $0 < p \leq \frac{1}{1+\alpha}$ and $\frac{1}{q} = \frac{1}{p} - \alpha$. This is a significant difference with respect to the case $0 < \alpha < 1$, $n = m =1$ and $A_1 = 1$ (i.e: $T_{\alpha, 1} = I_{\alpha}$ is discrete Riesz potential on $\mathbb{Z}$).

For more results about discrete fractional type operators one can consult \cite{Hardy}, \cite{Wainger}, and \cite{Oberlin}. On the other hand, the $H^p(\mathbb{R}^n) \to L^q(\mathbb{R}^n)$ boundedness of the continuous counterpart of (\ref{Tam}) was studied by the author and M. Urciuolo in \cite{RU1} and \cite{RU2} (see also \cite{RU3} and \cite{Rocha}),

\vspace{0.5cm}

The main aim of this note is to prove the following two results. These generalize the Theorems 7 and 9 in \cite{Rocha1} respectively.

\vspace{0.5cm}

{\sc Theorem} \ref{lplq}. {\it For $0 \leq \alpha < n$ and $m \in  \mathbb{N} \cap (1 - \frac{\alpha}{n}, \infty)$, let $T_{\alpha, m}$ be the discrete operator given by (\ref{Tam}). If $1 < p < \frac{n}{\alpha}$ and $\frac{1}{q} = \frac{1}{p} - \frac{\alpha}{n}$, then there exists a positive constant $C$ such that
\[
\| T_{\alpha, m} b \|_{\ell^q(\mathbb{Z}^n)} \leq C \| b \|_{\ell^p(\mathbb{Z}^n)},
\]
for all $b \in \ell^p(\mathbb{Z}^n)$.}

\

{\sc Theorem} \ref{Hplq}. {\it For $0 \leq \alpha < n$ and $m \in  \mathbb{N} \cap (1 - \frac{\alpha}{n}, \infty)$, let $T_{\alpha, m}$ be the operator given by (\ref{Tam}). Then, for $0 < p \leq 1$ and $\frac{1}{q} = \frac{1}{p} - \frac{\alpha}{n}$
\[
\| T_{\alpha, m} \, b \|_{\ell^{q}(\mathbb{Z}^n)} \leq C \| b \|_{H^{p}(\mathbb{Z}^n)},
\]
where $C$ does not depend on $b$.}

\

{\bf Notation.} We set $\mathbb{N}_0 = \mathbb{N} \cup \{0\}$. For every $E \subset \mathbb{Z}^n$, we denote by $\#E$ and $\chi_{E}$ 
the cardinality of the set $E$ and the characteristic sequence of $E$ on $\mathbb{Z}^n$ respectively. Given a real number $s \geq 0$, 
we write $\lfloor s \rfloor$ for the integer part of $s$. As usual we denote with $\mathcal{S}(\mathbb{R}^{n})$ the space of smooth 
and rapidly decreasing functions. If $i = (i_1, ..., i_n) \in \mathbb{Z}^n$ and $\beta$ is the multi-index 
$\beta =(\beta_{1},...,\beta _{n})$, then $i^{\beta} := i_1^{\beta_1} \cdot \cdot \cdot i_n^{\beta_n}$ and
$[\beta]:=\beta _{1} + \cdot \cdot \cdot + \beta _{n}$. Throughout this paper, $C$ will denote a positive real constant not necessarily 
the same at each occurrence.

\section{Preliminaries}

This section presents three auxiliary results necessary to obtain the main results of Sections 3 and 4. 

For $0 < p < \infty$ and a sequence $b = \{ b(i) \}_{i \in \mathbb{Z}^n}$ we say that $b$ belongs to $\ell^{p}(\mathbb{Z}^n)$ if
$$\| b \|_{\ell^p(\mathbb{Z}^n)} :=\left( \sum_{i \in \mathbb{Z}^n} |b(i)|^p \right)^{1/p} < \infty.$$ For $p=\infty$, we say that $b$ belongs to 
$\ell^{\infty}(\mathbb{Z}^n)$ if $$\|b \|_{\ell^\infty(\mathbb{Z}^n)} := \sup_{i \in \mathbb{Z}^n} |b(i)| < \infty.$$

In the sequel, for $j =(j_1, ..., j_n) \in \mathbb{Z}^{n}$ we put $|j|_{\infty} = \max \{ |j_k| : k=1, ..., n \}$ and 
$|j| = (j_1^2 + \cdot \cdot \cdot + j_n^2 )^{1/2}$.

A discrete cube $Q$ centered at $i_0 \in \mathbb{Z}^n$ is of the form $Q = \{ i \in \mathbb{Z}^n : |i - i_0|_{\infty} \leq N \}$, where 
$N \in \mathbb{N}_0$. It is clear that $\# Q = (2N+1)^n$. 

Let $0 \leq \alpha < n$, given a sequence $b = \{ b(i) \}_{i \in \mathbb{Z}^n}$ we define the centered fractional maximal sequence 
$M_{\alpha} b$ by
\[
(M_{\alpha} b)(j) = \sup_{Q \ni j} \frac{1}{\# Q^{1 - \frac{\alpha}{n}}} \sum_{i \in Q} |b(i)|, \,\,\,\,\,\, j \in \mathbb{Z}^n,
\]
where the supremum is taken over all discrete cubes $Q$ centered at $j$. We observe that if $\alpha = 0$, then $M_0 = M$ where $M$ is the centered discrete maximal operator.

The following result was proved in \cite[see Theorem 2.3 and Proposition 2.4]{Rocha2}.

\begin{proposition} \label{fract max}
Let $0 \leq \alpha < n$. If $1 < p < \frac{n}{\alpha}$ and $\frac{1}{q} = \frac{1}{p} - \frac{\alpha}{n}$, then
\[
\| M_{\alpha} b \|_{\ell^q(\mathbb{Z}^n)} \leq C \| b \|_{\ell^p(\mathbb{Z}^n)}, \,\,\,\, \forall \,\, b \in \ell^p(\mathbb{Z}^n).
\]
\end{proposition}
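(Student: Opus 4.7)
The plan is to mimic the classical Hedberg argument from the Euclidean setting. The case $\alpha=0$ is nothing but the $\ell^p$-boundedness of the centered discrete Hardy--Littlewood maximal function $M$, which for $1<p\le\infty$ follows from the weak type $(1,1)$ bound (obtained by a Vitali-type covering among the centered discrete cubes $\{i\in\mathbb{Z}^n:|i-i_0|_\infty\le N\}$, at the price of a dimensional constant of order $5^n$) together with Marcinkiewicz interpolation against the trivial $\ell^\infty$ estimate. The case $0<\alpha<n$ is then reduced to this bound by a pointwise inequality, so that no extra covering argument is needed.

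The core step is the discrete Hedberg-type estimate
\[
(M_\alpha b)(j)\le C\,(Mb(j))^{p/q}\,\|b\|_{\ell^p(\mathbb{Z}^n)}^{\,1-p/q},\qquad j\in\mathbb{Z}^n.
\]
To obtain it, for any discrete cube $Q$ centered at $j$ one factors
\[
\frac{1}{(\#Q)^{1-\alpha/n}}\sum_{i\in Q}|b(i)|=(\#Q)^{\alpha/n}\cdot\frac{1}{\#Q}\sum_{i\in Q}|b(i)|,
\]
and controls the average in two different ways: first, trivially by $(Mb)(j)$; second, by $(\#Q)^{-1/p}\,\|b\|_{\ell^p(\mathbb{Z}^n)}$ via H\"older's inequality. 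Interpolating the first bound with exponent $\theta$ and the second with exponent $1-\theta$, and choosing $1-\theta=p\alpha/n$, one kills the power of $\#Q$; since $p/q=1-p\alpha/n$, the resulting $Q$-independent estimate is exactly the inequality above, and taking the supremum over $Q\ni j$ proves it.

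To conclude, raise both sides to the $q$-th power. Using $(1-p/q)q=q-p$ one obtains
\[
\sum_{j\in\mathbb{Z}^n}(M_\alpha b)(j)^q\le \|b\|_{\ell^p(\mathbb{Z}^n)}^{\,q-p}\sum_{j\in\mathbb{Z}^n}(Mb(j))^p=\|b\|_{\ell^p(\mathbb{Z}^n)}^{\,q-p}\,\|Mb\|_{\ell^p(\mathbb{Z}^n)}^{\,p},
\]
and then the $\ell^p$-boundedness of $M$ yields $\|M_\alpha b\|_{\ell^q(\mathbb{Z}^n)}\le C\,\|b\|_{\ell^p(\mathbb{Z}^n)}$, as desired.

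The only truly nontrivial ingredient is the $\ell^p$ bound for the centered discrete maximal operator, and this is where I expect the \emph{main obstacle} to be: one must verify that the standard Euclidean Vitali covering argument transfers cleanly to cubes of the form $|i-i_0|_\infty\le N$ in $\mathbb{Z}^n$, without losing quantitative control when $N$ is small and the counting measure behaves quite differently from Lebesgue measure. Once this is in place, the rest of the proof is a purely algebraic interpolation identical in form to its continuous counterpart.
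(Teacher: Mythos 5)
Your argument is correct. Note, however, that the paper does not actually prove Proposition \ref{fract max}: it is imported verbatim from \cite{Rocha2} (Theorem 2.3 and Proposition 2.4 there), so there is no in-paper proof to compare against. Your self-contained route is the standard one and all the steps check out: the Hedberg-type interpolation $\min\{x,y\}\le x^{\theta}y^{1-\theta}$ with $1-\theta=p\alpha/n$ is legitimate precisely because $p<n/\alpha$ forces $\theta=p/q\in(0,1)$, and it cancels the power of $\#Q$ exactly as you claim. The ``main obstacle'' you flag is in fact harmless: the counting measure on $\mathbb{Z}^n$ together with the cubes $\{i:|i-i_0|_\infty\le N\}$, $N\in\mathbb{N}_0$, forms a doubling metric measure space with $\#\{|i-i_0|_\infty\le 3N\}=(6N+1)^n\le 3^n(2N+1)^n$ uniformly in $N$ (including the singleton case $N=0$), so the Vitali selection argument gives the weak $(1,1)$ bound with a purely dimensional constant and no degeneration at small scales; alternatively one can transfer the Euclidean bound by replacing $b$ with the function that is constant on unit cubes. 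Either way your proof is complete and arguably more transparent than an appeal to the external reference.
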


The following lemma is crucial to get the Theorem \ref{lplq}.

\begin{lemma} (\cite[Lemma 2.1]{Rocha3})\label{series0} If $\epsilon > 0$ and $N \in \mathbb{N}$, then 
\begin{equation}
\sum_{|j|_{\infty} \geq N} \frac{1}{|j|^{n+\epsilon}} \leq 2^{n}n^{n+\epsilon} \left(2+\frac{2^{\frac{\epsilon}{n}} n}{\epsilon} \right)^{n} 
N^{-\epsilon}.
\end{equation}
\end{lemma}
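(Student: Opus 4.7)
The plan is to reduce the $n$-dimensional sum to a one-dimensional tail by first replacing the Euclidean norm with the $\ell^\infty$ norm and then stratifying by discrete shells. Concretely, since $|j|_\infty \leq |j|$ for every $j \in \mathbb{Z}^n$, one has the pointwise bound $|j|^{-n-\epsilon} \leq |j|_\infty^{-n-\epsilon}$. At this stage one could alternatively use the sharper comparison $|j|^{n+\epsilon} \geq n^{-(n+\epsilon)/2}|j|_1^{n+\epsilon}$ to produce the $n^{n+\epsilon}$ prefactor that appears in the statement; either route leads to a bound of the same shape.

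Next, I would group the terms by the value of $|j|_\infty$: for $k \in \mathbb{N}$, $\#\{j \in \mathbb{Z}^n : |j|_\infty = k\} = (2k+1)^n - (2k-1)^n$. By the mean value theorem (applied to $t \mapsto t^n$ on $[2k-1,2k+1]$), this shell cardinality is at most $2n(2k+1)^{n-1}$, and hence at most a constant multiple of $n \, k^{n-1}$ once one uses $(2k+1) \leq 3k$ for $k \geq 1$. The sum then becomes
\[
\sum_{|j|_\infty \geq N} \frac{1}{|j|^{n+\epsilon}} \;\leq\; C_n \sum_{k=N}^{\infty} \frac{1}{k^{1+\epsilon}},
\]
where $C_n$ absorbs the combinatorial and norm-comparison constants.

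The final step is a standard integral comparison: for $N \in \mathbb{N}$,
\[
\sum_{k=N}^{\infty} \frac{1}{k^{1+\epsilon}} \;\leq\; N^{-1-\epsilon} + \int_{N}^{\infty} t^{-1-\epsilon}\, dt \;=\; N^{-1-\epsilon} + \frac{1}{\epsilon} N^{-\epsilon},
\]
which one then wraps into a clean bound of the form $\bigl(2 + \tfrac{2^{\epsilon/n} n}{\epsilon}\bigr)^{n} N^{-\epsilon}$ by distributing the estimate coordinatewise. The product shape $(2 + \tfrac{2^{\epsilon/n} n}{\epsilon})^n$ in the statement strongly suggests that the author performs the argument inductively on $n$, applying the one-dimensional tail estimate in each coordinate and tracking the constant as an $n$-fold product; I would mimic that bookkeeping to hit the stated constant exactly.

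The proof is essentially routine — no hard analytic obstacle appears. The main nuisance will be the bookkeeping of constants: getting the explicit prefactor $2^n n^{n+\epsilon}\bigl(2 + \tfrac{2^{\epsilon/n} n}{\epsilon}\bigr)^n$ in exactly the stated form requires choosing the norm comparison carefully at the start (to produce the $n^{n+\epsilon}$) and iterating the one-dimensional bound coordinate by coordinate (to produce the $n$-th power factor), rather than just estimating shell cardinalities crudely.
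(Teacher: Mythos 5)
The paper itself gives no proof of this lemma --- it is quoted verbatim from \cite[Lemma 2.1]{Rocha3} --- so the only way to compare is against the proof that the explicit constant encodes, and there your proposal has a genuine gap. Your shell decomposition ($\#\{|j|_\infty=k\}\le 2n(2k+1)^{n-1}$ followed by $\sum_{k\ge N}k^{-1-\epsilon}\le N^{-1-\epsilon}+\epsilon^{-1}N^{-\epsilon}$) is a perfectly sound way to get \emph{a} bound $C_n(1+\epsilon^{-1})N^{-\epsilon}$, but once you have collapsed everything to a one\nobreakdash-dimensional tail there is nothing left to ``distribute coordinatewise'': that final sentence is not a step, and a bound of the form $C_n(1+\epsilon^{-1})N^{-\epsilon}$ does not ``wrap into'' $2^n n^{n+\epsilon}(2+2^{\epsilon/n}n/\epsilon)^n N^{-\epsilon}$ without a separate (unperformed) verification that the former constant is dominated by the latter. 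Worse, the alternative coordinatewise/inductive route you gesture at fails as described: if you factor $|j|^{-(n+\epsilon)}\lesssim\prod_k(|j_k|+1)^{-1-\epsilon/n}$ and cover $\{|j|_\infty\ge N\}$ by the sets $\{|j_l|\ge N\}$, only one coordinate sees the constraint and you obtain $N^{-\epsilon/n}$, not $N^{-\epsilon}$. The idea that the stated constant actually encodes is the product lower bound
\begin{equation*}
|j|^{n+\epsilon}\;\ge\;|j|_\infty^{n+\epsilon}\;\ge\;2^{-(n+\epsilon)}\prod_{k=1}^{n}\bigl(|j_k|+|j|_\infty\bigr)^{1+\epsilon/n}\;\ge\;2^{-(n+\epsilon)}\prod_{k=1}^{n}\bigl(|j_k|+N\bigr)^{1+\epsilon/n}\quad\text{on }\{|j|_\infty\ge N\},
\end{equation*}
which makes the $n$-dimensional sum factor into $\bigl(\sum_{m\in\mathbb{Z}}(|m|+N)^{-1-\epsilon/n}\bigr)^{n}$, each factor contributing $O\bigl((1+n/\epsilon)N^{-\epsilon/n}\bigr)$ so that the $n$ factors of $N^{-\epsilon/n}$ multiply to $N^{-\epsilon}$; the $2^{\epsilon/n}$ further suggests the one-dimensional tail is handled dyadically. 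You never identify this device, and without it neither branch of your argument reaches the stated inequality.

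Two smaller bookkeeping errors: the comparison $|j|\ge n^{-1/2}|j|_1$ yields a prefactor $n^{(n+\epsilon)/2}$, not the $n^{n+\epsilon}$ you claim (the latter comes from $|j|_1\le n|j|$); and your mean-value estimate of the shell cardinality is fine but irrelevant to the stated constant, which contains no factor resembling $3^{n-1}$. To salvage the proposal you must either carry out the product factorization above, or else complete the shell argument and then explicitly prove the numerical inequality $2n(2+1/1)^{\,\cdot}\cdots\le 2^n n^{n+\epsilon}(2+2^{\epsilon/n}n/\epsilon)^n$ comparing your constant to the stated one; as written, neither is done.
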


Next, we introduce the definition of $p$-atom.

\begin{definition} \label{p atom} Let $0 < p \leq 1$ and $d_p := \lfloor n(p^{-1} - 1) \rfloor$. We say that a sequence 
$a = \{ a(i) \}_{i \in \mathbb{Z}^n}$ is an $(p, \infty, d_p)$-atom centered at a discrete cube $Q \subset \mathbb{Z}^n$ if following three conditions hold:

(a1) $\supp a \subset Q$,

(a2) $\| a \|_{\ell^\infty(\mathbb{Z}^n)} \leq (\# Q)^{-1/p}$,

(a3) $\displaystyle{\sum_{i \in Q}} i^{\beta} a(i) = 0$ for every multi-index $\beta=(\beta_1, ..., \beta_n) \in \mathbb{N}_0^n$ with 
$[\beta] \leq d_p$.
\end{definition}

Given a discrete cube $Q = \{ i \in \mathbb{Z}^n : |i - i_0|_{\infty} \leq N \}$ and $m$-invertible matrices $A_1, ..., A_m$ belonging to 
$M_n(\mathbb{Z})$, we define, for $k=1, ..., m$, $Q^{*}_k = \{ i \in \mathbb{Z}^n : |i - A^{-1}_k i_0|_{\infty} \leq 4 D N \}$, where
$D = \max \{ \|A_k^{-1} \| : k=1, ..., m  \}$. Then, we put $R = \mathbb{Z}^n \setminus \left( \bigcup_{k=1}^{m} Q^{*}_k \right) = \left( \bigcup_{k=1}^{m} Q^{*}_k \right)^c$. Moreover $R = \bigcup_{l=1}^m R_l$, where
\[
R_l = \{ j \in R : |j - A_l^{-1} i_0| \leq |j - A_k^{-1} i_0| \,\,\, \text{for all} \,\, l \neq k \},
\]
for every $l = 1, ..., m$.

By adapting the argument used in the proof of Lemma 14 in \cite{Rocha} to our setting, we obtain the following result.

\begin{lemma} \label{estim on Rl}
For $0 \leq \alpha < n$ and $m \in  \mathbb{N} \cap (1 - \frac{\alpha}{n}, \infty)$, let $T_{\alpha, m}$ be the discrete operator given 
by (\ref{Tam}). If $a = \{ a(i) \}_{i \in \mathbb{Z}^n}$ is an $(p, \infty,d_p)$-atom centered at a discrete cube $Q \subset \mathbb{Z}^n$, then
\[
|T_{\alpha, m}a(j)| \leq C \|a \|_{\ell^{\infty}} \sum_{l=1}^m \chi_{R_l}(j)
\left( M_{\frac{\alpha n}{n+d_p+1}}(\chi_Q)(A_l \, j) \right)^{\frac{n+d_p+1}{n}}, \,\,\,\, \text{if} \,\, j \in R,
\]
where $C$ does not depend on $a$.
\end{lemma}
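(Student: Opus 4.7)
I would prove the inequality pointwise for each $j \in R_l$ separately; summing over $l$ via $\chi_{R_l}$ then covers all of $R$. The strategy is to Taylor expand the kernel
\[
K_j(i) := \prod_{k=1}^m |i - A_k j|^{-\alpha_k}
\]
around the center $i_0$ of $Q$ and exploit the cancellation provided by the vanishing moments of $a$. The first step is to verify that $K_j$ is smooth on the relevant region: since $j \notin Q^*_k$ forces $|j - A_k^{-1} i_0|_\infty > 4DN$, applying $A_k$ yields $|A_k j - i_0|_\infty \geq 4N$, and consequently $|i - A_k j|_\infty \geq 3N$ for every $i \in Q$. Thus the restriction $i \neq A_k j$ in (\ref{Tam}) is automatic and $T_{\alpha, m} a(j) = \sum_{i \in Q} a(i)\, K_j(i)$.

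Applying Taylor's theorem with Lagrange remainder of order $d_p + 1$ to $K_j$ centered at $i_0$, and using the moment conditions (a3) on $a$ to kill the polynomial part, I obtain
\[
T_{\alpha, m} a(j) = \sum_{i \in Q} a(i) \sum_{[\beta] = d_p + 1} \frac{(i - i_0)^\beta}{\beta!}\, \partial^\beta K_j(\xi_i)
\]
for some $\xi_i$ on the segment $[i_0, i]$. The technical core is the derivative bound $|\partial^\beta K_j(\xi_i)| \leq C\, |A_l j - i_0|^{-(n - \alpha + d_p + 1)}$ for $[\beta] = d_p + 1$. I would get this via Leibniz's rule, bounding each factor $|\cdot - A_k j|^{-\alpha_k}$ by $C|\xi_i - A_k j|^{-\alpha_k - [\gamma_k]}$, and then two comparabilities. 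First, the defining property of $R_l$: from $|j - A_l^{-1} i_0| \leq |j - A_k^{-1} i_0|$ and $\|A_k^{-1}\| \leq D$ one deduces $|A_k j - i_0| \geq c\, |A_l j - i_0|$ for all $k$, with $c$ depending only on the matrices. Second, $|\xi_i - A_k j|_\infty \geq \tfrac{3}{4}|A_k j - i_0|_\infty$ since $|\xi_i - i_0|_\infty \leq N$ while $|A_k j - i_0|_\infty \geq 4N$. Because $\sum_k (\alpha_k + [\gamma_k]) = (n-\alpha) + (d_p + 1)$, the product collapses to the single claimed power.

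Using $|i - i_0| \leq \sqrt{n}\, N$ and $\#Q \leq (2N + 1)^n$, summation over $i \in Q$ yields
\[
|T_{\alpha, m} a(j)| \leq C\, \|a\|_{\ell^\infty}\, N^{n + d_p + 1}\, |A_l j - i_0|^{-(n - \alpha + d_p + 1)}.
\]
To conclude, I would rewrite this in the form claimed by the lemma. Setting $\beta_0 := \alpha n / (n + d_p + 1)$ and choosing the cube $Q' \ni A_l j$ of $\infty$-radius comparable to $|A_l j - i_0|_\infty$ that contains $Q$, the estimates $\#(Q' \cap Q) = \#Q \geq c N^n$ and $\#Q' \leq C\, |A_l j - i_0|^n$ give
\[
M_{\beta_0}(\chi_Q)(A_l j) \geq c\, \frac{N^n}{|A_l j - i_0|^{\,n - \beta_0}},
\]
and raising to the power $(n + d_p + 1)/n$ reproduces the previous display.

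The only step of real substance is the uniform comparability $|A_k j - i_0| \asymp |A_l j - i_0|$ across $k$, since this is what allows the $m$ distinct factors of $K_j$ to be absorbed into a single power of $|A_l j - i_0|$; this is precisely what the decomposition $R = \bigcup_{l=1}^m R_l$ is designed to supply, and tracking the dependence of constants on the matrices $A_k$ (especially via $\|A_k^{-1}\|$ and $\|A_l\|$) is where the bookkeeping will require the most care. Everything else is a routine combination of Taylor expansion, the Leibniz rule, and cube size estimates.
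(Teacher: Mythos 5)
Your proposal is correct and follows essentially the same route as the paper, which does not write out a proof but simply states that Lemma \ref{estim on Rl} is obtained by adapting the argument of Lemma 14 in \cite{Rocha}; that argument is exactly your combination of Taylor expansion of the kernel at $i_0$, cancellation from the moment conditions (a3), the $R_l$-comparability $|A_k j - i_0|\geq c\,|A_l j - i_0|$, and the minorization of $M_{\alpha n/(n+d_p+1)}(\chi_Q)(A_l j)$ by $c\,N^{n}|A_l j - i_0|^{-(n-\alpha n/(n+d_p+1))}$. The exponent bookkeeping checks out, since $(n-\frac{\alpha n}{n+d_p+1})\frac{n+d_p+1}{n}=n-\alpha+d_p+1$ matches the decay produced by the order-$(d_p+1)$ remainder.
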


\section{The $\ell^p(\mathbb{Z}^n) - \ell^q(\mathbb{Z}^n)$ boundedness of $T_{\alpha, m}$}

In this section we establish the $\ell^p(\mathbb{Z}^n) - \ell^q(\mathbb{Z}^n)$ boundedness of the discrete operator $T_{\alpha, m}$, for 
$1 < p < \frac{n}{\alpha}$ and $\frac{1}{q} = \frac{1}{p} - \frac{\alpha}{n}$.

\begin{theorem} \label{lplq}
For $0 \leq \alpha < n$ and $m \in  \mathbb{N} \cap (1 - \frac{\alpha}{n}, \infty)$, let $T_{\alpha, m}$ be the discrete operator given 
by (\ref{Tam}). If $1 < p < \frac{n}{\alpha}$ and $\frac{1}{q} = \frac{1}{p} - \frac{\alpha}{n}$, then there exists a positive constant $C$ such that
\[
\| T_{\alpha, m} b \|_{\ell^q(\mathbb{Z}^n)} \leq C \| b \|_{\ell^p(\mathbb{Z}^n)},
\]
for all $b \in \ell^p(\mathbb{Z}^n)$.
\end{theorem}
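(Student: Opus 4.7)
The plan is to pointwise dominate $|T_{\alpha,m} b(j)|$ by a sum of compositions of the discrete Riesz potential $I_\alpha$ with the invertible maps $A_k$, and then invoke the discrete Hardy--Littlewood--Sobolev inequality together with a change of variables. The key observation I would use is that, since $\alpha_1 + \cdots + \alpha_m = n - \alpha$ with each $\alpha_k > 0$, the weighted geometric mean inequality yields
\begin{equation*}
|i - A_1 j|^{\alpha_1} \cdots |i - A_m j|^{\alpha_m} \geq \Bigl(\min_{1 \leq k \leq m} |i - A_k j|\Bigr)^{n-\alpha},
\end{equation*}
so that, for $\alpha > 0$,
\begin{equation*}
|T_{\alpha,m} b(j)| \leq \sum_{k=1}^{m}\, \sum_{i \in \mathbb{Z}^n \setminus \{A_k j\}} \frac{|b(i)|}{|i - A_k j|^{n-\alpha}} = \sum_{k=1}^{m} (I_{\alpha}|b|)(A_k j).
\end{equation*}

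Next, assuming $\alpha > 0$, I would take $\ell^q$ norms, use the elementary inequality $(x_1 + \cdots + x_m)^q \leq m^{q-1}(x_1^q + \cdots + x_m^q)$, and exploit the fact that each $A_k$ is invertible in $M_n(\mathbb{Z})$ --- so that $j \mapsto A_k j$ is injective on $\mathbb{Z}^n$ --- to conclude $\|T_{\alpha,m} b\|_{\ell^q} \leq m\, \|I_{\alpha}|b|\|_{\ell^q}$. The $\ell^p \to \ell^q$ boundedness of $I_\alpha$ I would derive from a Hedberg-type pointwise estimate $(I_\alpha|b|)(j) \leq C\, \|b\|_{\ell^p}^{\alpha p / n}\, (Mb(j))^{1 - \alpha p / n}$, obtained by dyadic decomposition of the sum defining $I_\alpha|b|(j)$ around $j$ followed by H\"older on each annulus, combined with the $\ell^p$-boundedness of the centered maximal operator $M$ (the $\alpha = 0$ case of Proposition \ref{fract max}). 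The case $\alpha = 0$ requires a separate treatment, since $|i - A_k j|^{-n}$ is not summable in $i$; here I would exploit the standing hypothesis that $A_k - A_\ell$ is invertible for $k \neq \ell$, which guarantees that the $m \geq 2$ points $A_1 j, \ldots, A_m j$ are mutually separated by a distance of order $|j|$. Partitioning via Voronoi cells $E_k(j) = \{i : |i - A_k j| \leq |i - A_\ell j| \text{ for all } \ell\}$, on $E_k(j)$ one has $|i - A_\ell j| \geq c|j|$ for $\ell \neq k$; splitting the inner sum at the scale $|i - A_k j| \sim |j|$, the local part is dominated by $Mb(A_k j)$ through dyadic annuli (using $\alpha_k < n$), while the far part behaves as a truncated Calder\'on--Zygmund operator. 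Since $q = p$ when $\alpha = 0$, the $\ell^p$-boundedness of $M$ and of the singular integral piece, combined with the same change of variables, closes the argument.

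The main obstacle will be precisely the endpoint $\alpha = 0$: the clean reduction to $I_\alpha$ plus HLS that makes the range $\alpha > 0$ essentially immediate is unavailable, and one is forced to exploit the geometric separation guaranteed by the invertibility of the differences $A_k - A_\ell$, together with a Calder\'on--Zygmund-type analysis of the tail, in order to control the non-summable part of the kernel. Everything else --- the pointwise domination, the $m^{q-1}$ factor, and the change-of-variables bookkeeping --- is routine.
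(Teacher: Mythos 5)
Your proposal follows essentially the same route as the paper. For $0<\alpha<n$ the paper uses exactly your reduction: the pointwise bound $|T_{\alpha,m}b(j)|\leq\sum_{k=1}^m (I_\alpha|b|)(A_kj)$, injectivity of $j\mapsto A_kj$ on $\mathbb{Z}^n$, and the known $\ell^p\to\ell^q$ bound for $I_\alpha$ (the paper simply cites Stein--Wainger rather than re-proving it \`a la Hedberg, but that is immaterial). For $\alpha=0$ the paper also uses the separation $|A_kj-A_\ell j|\geq d|j|$ coming from the invertibility of $A_k-A_\ell$, and its sets $I_1,I_2$ are precisely the near parts of your Voronoi cells, estimated by $Mb(A_kj)$ through dyadic annuli as you describe.

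The one place where your plan, taken literally, would stall is the far region in the case $\alpha=0$. The kernel there is positive with no cancellation, so it is not a truncated Calder\'on--Zygmund operator and CZ theory does not apply; what one actually has is the Hardy-type bound $|i-A_1j|^{-\alpha_1}|i-A_2j|^{-\alpha_2}\lesssim \max(|i|,|j|)^{-n}$ on that region, whose $\ell^p$-boundedness for $1<p<\infty$ follows from a Schur test or Hardy's inequality. The paper instead splits the far region into $I_3=\{|i|<2\sqrt{n}D|j_0|\}$, controlled by $Mb(j_0)$, and $I_4=\{|i|\geq 2\sqrt{n}D|j_0|\}$, where H\"older plus Lemma \ref{series0} give the pointwise bound $C\|b\|_{\ell^p}|j_0|^{-n/p}$, hence only a weak $(p,p)$ estimate, and strong boundedness is then recovered by Marcinkiewicz interpolation. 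Either repair works, but you should replace the CZ framing by one of these arguments. Two minor omissions: the point $j={\bf 0}$, where all the $A_kj$ coincide and the separation degenerates, must be treated separately (a single application of H\"older suffices, as in the paper's operator $\widetilde{T}$); and you should note that reducing to $m=2$ is harmless, as the paper does.
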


\begin{proof} Given a sequence $b = \{ b(i) \}_{i \in \mathbb{Z}^n}$ we put $|b| = \{ |b(i)| \}_{i \in \mathbb{Z}^n}$. We study the cases 
$0 < \alpha < n$ and $\alpha = 0$ separately. For $0 < \alpha < n$ it is easy to check that
\[
|(T_{\alpha, m} \, b)(j)| \leq \sum_{k=1}^{m} (I_{\alpha}|b|)(A_k j), \,\,\,\, \forall j \in \mathbb{Z}^n.
\]
We observe that $\sum_{j \in \mathbb{Z}^n} |(I_{\alpha}|b|)(A j)|^q \leq  \sum_{j \in \mathbb{Z}^n} |(I_{\alpha}|b|)(j)|^q$ for 
every invertible matrix $A \in M_n(\mathbb{Z})$, since $A(\mathbb{Z}^n) \subset \mathbb{Z}^n$. Thus, the 
$\ell^{p}(\mathbb{Z}^n)$ - $\ell^{q}(\mathbb{Z}^n)$ boundedness of $T_{\alpha, m}$ ($0 < \alpha < n$) follows from Theorem 
\cite[Proposition (a)]{Wainger} or \cite[Theorem 3.1]{Rocha2}.

For $\alpha = 0$, we have that $m \geq 2$, without loss of generality, we may consider $m=2$. We point out that this case is entirely 
representative for the general case $m \geq 2$. Now, we introduce the auxiliary operator $\widetilde{T}$ defined by 
$(\widetilde{T} b)(j) = (T_{0, 2} b)(j)$ if $j \neq {\bf 0}$ and $(\widetilde{T} b)({\bf 0}) = 0$. From H\"older inequality and 
Lemma \ref{series0} applied with $N=1$ and $\epsilon = n(p' - 1)$, we have that
\[
|(T_{0, 2} b)({\bf 0})| \leq \| \{ |i|^{-n} \} \|_{\ell^{p'}(\mathbb{Z}^n \setminus \{ {\bf 0 }\})} \| b \|_{\ell^{p}(\mathbb{Z}^n)}
< \infty, \,\,\,\,\,\, \text{for all} \,\, 1 \leq p < \infty.
\]
So, it suffices to show that $\widetilde{T}$ is bounded on $\ell^{p}(\mathbb{Z}^n)$, $1 < p < +\infty$. For them, we put
$d = \min \{ |A_1 x - A_2 x| : |x|=1 \}$ and $D = \max \{ \| A_1 \|, \| A_2 \|  \}$. Since the matrices $A_1$, $A_2$ and $A_1 - A_2$ are invertible with integer coefficients we have that $d > 0$ and $D \geq 1$. For $j_0 \in \mathbb{Z}^n \setminus \{{\bf 0} \}$,  we write 
$\mathbb{Z}^n \setminus \{ A_1 j_0, A_2 j_0 \} = I_1 \cup I_2 \cup I_3 \cup I_4$, where
\[
I_k = \left\{ i \in \mathbb{Z}^n : 0 < |i - A_k j_0| \leq \frac{d}{2} |j_0| \right\}, \,\,\,\,\, \text{for} \, k=1,2,
\] 
\[
I_3 = \{ i \in \mathbb{Z}^n : |i| < 2 \sqrt{n} D |j_0| \} \cap (I_1^c \cap I_2^c), \,\,\,\, \text{and} \,\,\,\, 
I_4 = \{ i \in \mathbb{Z}^n : |i| \geq 2 \sqrt{n} D |j_0| \} \cap (I_1^c \cap I_2^c).
\]
Then,
\[
|(\widetilde{T} b)(j_0)|=|(T_{\alpha, 2} \, b)(j_0)| \leq \left( \sum_{i \in I_1} + \sum_{i \in I_2} + 
\sum_{i \in I_3} + \sum_{i \in I_4} \right) \frac{|b(i)|}{|i- A_1j_0|^{\alpha_1} |i- A_2j_0|^{\alpha_2}}.
\]
First, we estimate the sum on $I_1$. If $i \in I_1$, then $|i - A_2 j_0| = |A_1 j_0 - A_2 j_0 + i - A_1 j_0| \geq \frac{d}{2} |j_0|$. So,
\[
\sum_{i \in I_1} \frac{|b(i)|}{|i- A_1 j_0|^{\alpha_1} |i - A_2 j_0|^{\alpha_2}} \leq \frac{2^{\alpha_2} }{d^{\alpha_2}|j_0|^{\alpha_2}} 
\sum_{0 < |i- A_1 j_0| \leq \frac{d}{2}|j_0|} \frac{|b(i)|}{|i-A_1 j_0|^{\alpha_1}} =:{\sum}_1.
\]
Now, we take $k_0 \in \mathbb{N}_0$ such that $2^{k_0} \leq \frac{d}{2}|j_0| < 2^{k_0 + 1}$, thus
\[
{\sum}_1 = \sum_{k=0}^{k_0} \frac{2^{\alpha_2}}{d^{\alpha_2}|j_0|^{\alpha_2}} \sum_{2^{-(k+2)}d|j_0| < |i- A_1 j_0| \leq 2^{-(k+1)}d|j_0|} 
\frac{|b(i)|}{|i- A_1 j_0|^{\alpha_1}}
\] 
\[
\leq 2^{\alpha_2}\sum_{k=0}^{k_0} \frac{2^{(k+2)\alpha_1}}{d^{n}|j_0|^n} \sum_{|i- A_1 j_0| \leq \lfloor 2^{-(k+1)} d |j_0| \rfloor} |b(i)|
\]
\[
= 2^{\alpha_2 + 2\alpha_1} \sum_{k=0}^{k_0} \frac{2^{-\alpha_2 k}}{(2 \cdot 2^{-(k+1)} d |j_0|)^n} 
\sum_{|i- A_1 j_0| \leq \lfloor 2^{-(k+1)}d|j_0| \rfloor} |b(i)|
\]
\[
\leq 2^{\alpha_2 + 2\alpha_1} \sum_{k=0}^{k_0} 2^{-\alpha_2 k} \frac{1}{(2 \cdot \lfloor 2^{-(k+1)} d |j_0| \rfloor + 1)^n}  
\sum_{|i- A_1 j_0| \leq \lfloor 2^{-(k+1)}d|j_0| \rfloor} |b(i)|,
\]
this last inequality follows from that $\lfloor 2^{-(k+1)} d |j_0| \rfloor \leq 2^{-(k+1)} d |j_0|$ and that
$\frac{2 \cdot \lfloor 2^{-(k+1)} d |j_0| \rfloor + 1}{2 \cdot \lfloor 2^{-(k+1)} d |j_0| \rfloor} \leq 2$ for each $k=0, ..., k_0$. Thus
\begin{equation} \label{I1}
\sum_{i \in I_1} \frac{|b(i)|}{|i- A_1 j_0|^{\alpha_1} |i - A_2 j_0|^{\alpha_2}} \leq 2^{\alpha_2 + 2\alpha_1} 
\left( \sum_{k=0}^{\infty} 2^{-\alpha_2 k} \right) (Mb)(A_1 j_0).
\end{equation}
Similarly, it is seen that
\begin{equation} \label{I2}
\sum_{i \in I_2} \frac{|b(i)|}{|i- A_1 j_0|^{\alpha_1} |i- A_2 j_0|^{\alpha_2}} \leq C (Mb)(A_2 j_0).
\end{equation}
On $I_3$ we obtain,
\begin{equation} \label{I3}
\sum_{i \in I_3} \frac{|b(i)|}{|i- A_1 j_0|^{\alpha_1} |i- A_2 j_0|^{\alpha_2}} \leq 
\frac{2^n}{d^n} |j_0|^{-n} \sum_{|i| < 2 \sqrt{n} D |j_0|} |b(i)|
\end{equation}
\[
\leq \frac{2^n}{d^n} |j_0|^{-n} \sum_{|i-j_0| \leq (2 \sqrt{n} D+1) |j_0|} |b(i)| \leq C (Mb)(j_0).
\]
Now, on $I_4$ we have, for every $k=1,2$, that $|i - A_k j_0| \geq \frac{(2 \sqrt{n} D - 1)}{2 \sqrt{n} D} |i|$ for all $i \in I_4$. Since 
$I_4 \subset \{ i \in \mathbb{Z}^n : |i| \geq 2 \sqrt{n} D |j_0| \} \subset \{ i \in \mathbb{Z}^n : |i|_{\infty} \geq 2 \lfloor D \rfloor 
|j_0| \}$, it follows that
\begin{equation} \label{estim}
\sum_{i \in I_4} \frac{|b(i)|}{|i- A_1 j_0|^{\alpha_1} |i- A_2 j_0|^{\alpha_2}} \leq C \sum_{|i|_{\infty} \geq 2 \lfloor D \rfloor |j_0|} 
|i|^{-n} |b(i)| \leq C \| b \|_{\ell^{p}} |j_0|^{-n/p} \leq C \| b \|_{\ell^{p}} |j_0|^{-n/p}_{\infty},
\end{equation}
where the second inequality follows from H\"older's inequality and Lemma \ref{series0} applied with $N= 2 \lfloor D \rfloor |j_0|$ and 
$\epsilon = n (p' - 1)$. 
Thus (\ref{estim}) implies that
\begin{equation} \label{I4}
\# \left\{ j \neq 0: \left| \sum_{i \in I_4} |i- A_1 j|^{-\alpha_1}|i - A_2 j|^{-\alpha_2} b(i) \right| > \lambda \right\} \leq 
\left(C \frac{\|b\|_{\ell^{p}}}{\lambda} \right)^{p}, \,\, 1 \leq p < \infty.
\end{equation}
Finally, (\ref{I1}), (\ref{I2}), (\ref{I3}), (\ref{I4}) and Proposition \ref{fract max} with $\alpha=0$ allow us to conclude that 
$\widetilde{T}$ is a bounded operator $\ell^{p}(\mathbb{Z}^n) \to \ell^{p, \infty}(\mathbb{Z}^n)$, for every 
$1 \leq p < \infty$. Then, the $\ell^{p}(\mathbb{Z})$ boundedness of $\widetilde{T}$ follows from the Marcinkiewicz interpolation theorem 
(see Theorem 1.3.2 in \cite{Grafakos}). This completes the proof.
\end{proof}

\begin{remark}
Let $0 \leq \alpha < n$. If $\frac{n}{n-\alpha} < q < \infty$ and $0 < p \leq \frac{nq}{n + \alpha q}$, then the operator 
$T_{\alpha, m}$ is bounded from $\ell^{p}(\mathbb{Z}^n)$ into $\ell^{q}(\mathbb{Z}^n)$. This follows from Theorem \ref{lplq} and 
the embedding $\ell^{p_1}(\mathbb{Z}^n) \hookrightarrow \ell^{p_2}(\mathbb{Z}^n)$ valid for $0 < p_1 < p_2 \leq \infty$.

\end{remark}

\section{The $H^p(\mathbb{Z}^n) - \ell^q(\mathbb{Z}^n)$ boundedness of $T_{\alpha, m}$}

Firstly, we recall the definition of $H^p(\mathbb{Z}^n)$ spaces and state the atomic decomposition given by S. Boza and M. Carro 
in \cite{Carro}. 

Let $\Phi \in \mathcal{S}(\mathbb{R}^n)$ with $\int_{\mathbb{R}^n} \Phi =1$, $\Phi^d$ denotes the restriction of $\Phi$ on $\mathbb{Z}^n$. Now, for $t>0$, we consider $\Phi_t^d(j) = t^{-n} \Phi(j/t)$ if $j \neq {\bf 0}$ and $\Phi_t^d({\bf 0}) = 0$. Then, by 
\cite[Theorem 2.7]{Carro}, we define
\[
H^p(\mathbb{Z}^n) = \left\{ b \in \ell^p(\mathbb{Z}^n) : \sup_{t>0} |(\Phi_t^d \ast_{\mathbb{Z}^n} b)| \in \ell^p(\mathbb{Z}^n) \right\},
\,\,\, 0 < p \leq 1,
\]
with the "$H^p(\mathbb{Z}^n)$-norm" given by
\[
\| b \|_{H^p(\mathbb{Z}^n)} := \| b \|_{\ell^p(\mathbb{Z}^n)} + \|\sup_{t>0} |(\Phi_t^d \ast_{\mathbb{Z}^n} b)| \|_{\ell^p(\mathbb{Z}^n)}.
\]

From Definition \ref{p atom}, we have that if $a = \{ a(j) \}_{j \in \mathbb{Z}^n}$ is an $(p, \infty, d_p)$-atom, then 
$a = \{ a(j) \}_{j \in \mathbb{Z}^n} \in H^p(\mathbb{Z}^n)$. The atomic decomposition for $H^p(\mathbb{Z}^n)$, $0 < p \leq 1$, developed 
in \cite{Carro} is as follows:

\begin{theorem} (\cite[Theorem 3.7]{Carro}) \label{atomic Hp} Let $0 < p \leq 1$, $d_p = \lfloor n (p^{-1} - 1) \rfloor$ and 
$b \in H^{p}(\mathbb{Z}^n)$. Then there exist a sequence of $(p, \infty, d_p)$-atoms $\{ a_k \}_{k=0}^{+\infty}$, a sequence of scalars 
$\{ \lambda_k \}_{k=0}^{+\infty}$ and a positive constant $C$, which depends only on $p$ and $n$, with 
$\sum_{k=0}^{+\infty} |\lambda_k |^{p} \leq C \| b \|_{H^{p}(\mathbb{Z}^n)}^{p}$ such that $b = \sum_{k=0}^{+\infty} \lambda_k a_k$, where the series converges in $H^{p}(\mathbb{Z}^n)$.
\end{theorem}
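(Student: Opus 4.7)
The plan is to adapt the classical Fefferman--Stein grand maximal atomic decomposition strategy to the lattice $\mathbb{Z}^n$. Given $b \in H^p(\mathbb{Z}^n)$, set $b^{\ast}(j) := \sup_{t>0} |(\Phi_t^d \ast_{\mathbb{Z}^n} b)(j)|$, which by assumption lies in $\ell^p(\mathbb{Z}^n)$. For each $k \in \mathbb{Z}$ define the level set $\Omega_k = \{ j \in \mathbb{Z}^n : b^{\ast}(j) > 2^k \}$; since $\|b^{\ast}\|_{\ell^p}^p \geq 2^{kp} \#\Omega_k$, each $\Omega_k$ is a finite subset of $\mathbb{Z}^n$ with $\#\Omega_k \leq 2^{-kp} \|b\|_{H^p(\mathbb{Z}^n)}^p$, and $\Omega_k \downarrow \emptyset$ as $k \to +\infty$.

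Next I would perform a discrete Whitney-type decomposition of each $\Omega_k$, writing $\Omega_k = \bigsqcup_i Q_{k,i}$ as a disjoint union of discrete cubes whose side lengths are comparable to $\mathrm{dist}(Q_{k,i}, \Omega_k^c)$. To these I attach slightly dilated cubes $Q_{k,i}^{\ast}$ and a discrete partition of unity $\{\eta_{k,i}\}_i$ subordinate to $\{Q_{k,i}^{\ast}\}_i$ with $\sum_i \eta_{k,i} = \chi_{\Omega_k}$ and bounded overlap. Let $P_{k,i}$ be the orthogonal projection of $b$ onto polynomials of degree $\leq d_p$ with respect to the weighted inner product $\langle f,g \rangle_{k,i} = \bigl(\sum_j \eta_{k,i}(j)\bigr)^{-1} \sum_j f(j) g(j) \eta_{k,i}(j)$. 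Define the bad pieces $b_{k,i}(j) = (b(j) - P_{k,i}(j))\eta_{k,i}(j)$ and the good piece $g_k = b - \sum_i b_{k,i}$; by construction $\sum_{j \in \mathbb{Z}^n} j^{\beta} b_{k,i}(j) = 0$ for every $[\beta] \leq d_p$.

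The crux would be to establish the pointwise bound $\|g_k\|_{\ell^{\infty}} \lesssim 2^k$; granting this, one writes the telescoping identity $b = \sum_k (g_{k+1} - g_k)$ with convergence in $\ell^p$. Each difference $g_{k+1}-g_k$ decomposes over the Whitney cubes of $\Omega_k$ into pieces $\lambda_{k,i} a_{k,i}$, where $a_{k,i}$ is supported on an enlargement of $Q_{k,i}$, satisfies the moment conditions up to order $d_p$ inherited from the $b_{k,i}$, and after normalization obeys $\|a_{k,i}\|_{\ell^{\infty}} \leq (\#Q_{k,i})^{-1/p}$, with $|\lambda_{k,i}| \lesssim 2^k (\#Q_{k,i})^{1/p}$. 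Summing then gives
\[
\sum_{k,i} |\lambda_{k,i}|^p \lesssim \sum_k 2^{kp} \sum_i \#Q_{k,i} = \sum_k 2^{kp} \#\Omega_k \lesssim \|b^{\ast}\|_{\ell^p}^p \leq \|b\|_{H^p(\mathbb{Z}^n)}^p,
\]
where the penultimate inequality follows from a layer-cake identity applied to $b^{\ast}$, yielding the required atomic decomposition.

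The hard step will be the $\ell^{\infty}$ control of $g_k$ on all of $\mathbb{Z}^n$. On $\Omega_k^c$ one would compare $b$ against its polynomial projections $P_{k,i}$ by evaluating $\Phi_t^d \ast b$ at a nearby point of $\Omega_k^c$ with $t$ matching the scale of the Whitney cube, thereby bounding each $P_{k,i}$ pointwise by $C \cdot 2^k$. This forces a careful choice of the analyzing bump $\Phi \in \mathcal{S}(\mathbb{R}^n)$ so that the discretization $\Phi^d$ generates a family flexible enough to reproduce polynomials of degree $\leq d_p$ up to a controlled error, and requires verifying that the weighted Gram matrices defining the projections $P_{k,i}$ are invertible with uniform norm. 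A secondary technical point is the convergence of the atomic series in $H^p(\mathbb{Z}^n)$, which reduces to controlling tails of $\sum_{k \geq K}$ and $\sum_{k \leq -K}$ via the $\ell^p$ decay of $b^{\ast}$ and the uniform $\ell^{\infty}$ bound of $g_k$ respectively.
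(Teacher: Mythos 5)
This theorem is not proved in the paper at all: it is quoted verbatim from S.~Boza and M.~Carro, \emph{Hardy spaces on $\mathbb{Z}^N$} (Theorem 3.7 of that reference), and the present paper simply uses it as a black box. So there is no internal proof to compare against; your proposal has to be judged as a self-contained argument, and as such it is an outline of the classical Fefferman--Stein/Calder\'on--Zygmund decomposition rather than a proof. The parts you carry out (finiteness of $\Omega_k$, the bookkeeping $\sum_{k,i}|\lambda_{k,i}|^p \lesssim \sum_k 2^{kp}\#\Omega_k \lesssim \|b^\ast\|_{\ell^p}^p$) are correct but are the easy parts; the two steps you explicitly defer are where the entire difficulty lives, and in the discrete setting they are not merely ``technical.''

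Two concrete gaps. First, the space $H^p(\mathbb{Z}^n)$ is defined here via a \emph{single} radial maximal function $\sup_{t>0}|\Phi_t^d\ast b|$ (with the added wrinkle that $\Phi_t^d(\mathbf{0})=0$). The Calder\'on--Zygmund construction needs to bound the polynomial projections $P_{k,i}$ and the good part $g_k$ by $C2^k$ on $\Omega_k^c$, and that step tests $b$ against the bump functions $\eta_{k,i}$ and against the dual basis of the polynomial projections --- i.e., it requires control of a \emph{grand} maximal function taken over a normalized family of test functions. The equivalence of the one-function maximal characterization with the grand maximal one is itself a substantial theorem (essentially the content of the Boza--Carro papers), so your plan either presupposes what is to be proved or must first establish that equivalence on $\mathbb{Z}^n$; ``a careful choice of the analyzing bump $\Phi$'' does not get around this. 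Second, a discrete Whitney decomposition of $\Omega_k$ necessarily produces cubes adjacent to $\Omega_k^c$ whose side length is $0$ or $1$, i.e., cubes containing fewer points than the dimension of the space of polynomials of degree $\leq d_p$. On such cubes the weighted Gram matrix you propose to invert is \emph{singular}, the projection $P_{k,i}$ is not well defined, and a nonzero sequence supported there cannot satisfy $d_p+1$ orders of moment conditions at all. This degeneracy has no analogue in $\mathbb{R}^n$ and is precisely why the published proof does not redo the continuous argument on the lattice but instead builds bounded extension/restriction operators between $H^p(\mathbb{Z}^n)$ and $H^p(\mathbb{R}^n)$ and transfers the continuous atomic decomposition back to the lattice. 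As written, your proposal would fail at both of these points.
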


Now, we are in a position to prove our main result.

\begin{theorem} \label{Hplq} For $0 \leq \alpha < n$ and $m \in  \mathbb{N} \cap (1 - \frac{\alpha}{n}, \infty)$, let $T_{\alpha, m}$ be the operator given by (\ref{Tam}). Then, for $0 < p \leq 1$ and $\frac{1}{q} = \frac{1}{p} - \frac{\alpha}{n}$
\[
\| T_{\alpha, m} \, b \|_{\ell^{q}(\mathbb{Z}^n)} \leq C \| b \|_{H^{p}(\mathbb{Z}^n)},
\]
where $C$ does not depend on $b$.
\end{theorem}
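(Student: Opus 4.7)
The plan is to use the atomic decomposition from Theorem \ref{atomic Hp} to reduce the estimate to a uniform bound on atoms. Given $b\in H^{p}(\mathbb{Z}^n)$, write $b=\sum_{k}\lambda_{k}a_{k}$ with $\sum_{k}|\lambda_{k}|^{p}\leq C\|b\|_{H^{p}}^{p}$. A routine $p$-subadditivity argument (using the ordinary triangle inequality when $q\geq 1$ together with $p\leq 1$, and the $q$-subadditivity of $\|\cdot\|_{\ell^{q}}^{q}$ followed by $p/q\leq 1$ when $q<1$) yields
\[
\|T_{\alpha,m}b\|_{\ell^{q}}^{p}\;\leq\;\sum_{k}|\lambda_{k}|^{p}\,\|T_{\alpha,m}a_{k}\|_{\ell^{q}}^{p},
\]
so the matter is reduced to proving $\|T_{\alpha,m}a\|_{\ell^{q}(\mathbb{Z}^n)}\leq C$ uniformly in the atom $a$.

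Fix an $(p,\infty,d_{p})$-atom $a$ supported on a cube $Q$ of radius $N$, and split
$\mathbb{Z}^n=\bigl(\bigcup_{k=1}^{m}Q_{k}^{*}\bigr)\cup R$ using the enlarged cubes $Q_{k}^{*}$ defined just before Lemma \ref{estim on Rl}. For the near part, I would choose an auxiliary exponent $\tilde{p}$ with $1<\tilde{p}<n/\alpha$ (any such $\tilde{p}$ works, since $\#Q_{k}^{*}$ is finite) and set $\frac{1}{\tilde{q}}=\frac{1}{\tilde{p}}-\frac{\alpha}{n}$. By H\"older's inequality, the $\ell^{\tilde{p}}\to\ell^{\tilde{q}}$ boundedness from Theorem \ref{lplq}, condition (a2), and $\#Q_{k}^{*}\leq C_{D}\,\#Q$, one obtains
\[
\|T_{\alpha,m}a\|_{\ell^{q}(Q_{k}^{*})}\;\leq\;(\#Q_{k}^{*})^{\frac{1}{q}-\frac{1}{\tilde{q}}}\|T_{\alpha,m}a\|_{\ell^{\tilde{q}}}
\;\leq\;C\,(\#Q)^{\frac{1}{q}-\frac{1}{\tilde{q}}+\frac{1}{\tilde{p}}-\frac{1}{p}},
\]
and the exponent is zero because $\frac{1}{q}-\frac{1}{p}=-\frac{\alpha}{n}=\frac{1}{\tilde{q}}-\frac{1}{\tilde{p}}$.

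For the far part, I would apply the pointwise estimate from Lemma \ref{estim on Rl}. Setting $\tilde{\alpha}=\frac{\alpha n}{n+d_{p}+1}$ and $s=\frac{n+d_{p}+1}{n}$, this gives
\[
\|T_{\alpha,m}a\|_{\ell^{q}(R)}^{q}\;\leq\;C\,\|a\|_{\ell^{\infty}}^{q}\sum_{l=1}^{m}\sum_{j\in R_{l}}\bigl(M_{\tilde{\alpha}}(\chi_{Q})(A_{l}j)\bigr)^{qs}.
\]
Let $\tilde{q}:=qs$ and $\tilde{p}:=ps$. A short computation shows $\frac{1}{\tilde{q}}=\frac{1}{\tilde{p}}-\frac{\tilde{\alpha}}{n}$, and the choice $d_{p}=\lfloor n(p^{-1}-1)\rfloor$ guarantees $\tilde{p}>1$, so Proposition \ref{fract max} applies. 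Since $A_{l}\mathbb{Z}^n\subset\mathbb{Z}^n$, a change of index yields $\sum_{j\in R_{l}}(M_{\tilde{\alpha}}\chi_{Q}(A_{l}j))^{\tilde{q}}\leq\|M_{\tilde{\alpha}}\chi_{Q}\|_{\ell^{\tilde{q}}}^{\tilde{q}}\leq C\,(\#Q)^{\tilde{q}/\tilde{p}}=C\,(\#Q)^{q/p}$. Combined with $\|a\|_{\ell^{\infty}}\leq(\#Q)^{-1/p}$, this cancels to give $\|T_{\alpha,m}a\|_{\ell^{q}(R)}\leq C$.

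Combining the near and far estimates produces the uniform bound $\|T_{\alpha,m}a\|_{\ell^{q}}\leq C$, and the theorem follows. The main obstacles I foresee are two bookkeeping points: first, checking that the exponent $\tilde{p}=ps$ associated with Lemma \ref{estim on Rl} indeed satisfies $\tilde{p}>1$, which uses the integer-part definition of $d_{p}$ in an essential way; and second, making sure the atomic series $\sum_{k}\lambda_{k}T_{\alpha,m}a_{k}$ may be legitimately identified with $T_{\alpha,m}b$, which can be handled by first proving the inequality for finite sums of atoms and then passing to the limit using the $H^{p}$-convergence of the decomposition together with the $\ell^{p}$-bound on $T_{\alpha,m}$ coming from Theorem \ref{lplq} (or an embedding argument in the range $p\leq 1$).
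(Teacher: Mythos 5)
Your proposal is correct and follows essentially the same route as the paper: reduction to a uniform bound on $(p,\infty,d_p)$-atoms via the atomic decomposition, a near-part estimate on $\bigcup_k Q_k^*$ by H\"older plus the $\ell^{p_0}$--$\ell^{q_0}$ boundedness of Theorem \ref{lplq}, and a far-part estimate on $R$ via Lemma \ref{estim on Rl} and Proposition \ref{fract max} with the exponents $\widetilde q = q\frac{n+d_p+1}{n}$, $\widetilde p = p\frac{n+d_p+1}{n}$. Your closing remarks on verifying $\widetilde p>1$ from the definition of $d_p$ and on justifying the pointwise identification $T_{\alpha,m}b=\sum_k\lambda_k T_{\alpha,m}a_k$ through convergence in an $\ell^{p_0}$ with $p_0>1$ are exactly the two points the paper also addresses.
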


\begin{proof}
For $0 < p \leq 1$ and $\frac{1}{q} = \frac{1}{p} - \frac{\alpha}{n}$, we shall prove that there exists an universal positive constant $C$ such that
\begin{equation} \label{uniform estim}
\| T_{\alpha, m} a \|_{\ell^q} \leq C,
\end{equation}
for all $(p, \infty, d_p)$-atom $a= \{ a(i) \}_{i \in \mathbb{Z}^n}$. For them, we consider an $(p, \infty, d_p)$-atom $a= \{ a(i) \}_{i \in \mathbb{Z}^n}$ supported on the discrete cube $Q= \{ i \in \mathbb{Z}^n : |i - i_0|_{\infty} \leq N \}$. For every $k=1, ..., m$, let 
$Q^{*}_k = \{ i \in \mathbb{Z}^n : |i - A^{-1}_k i_0|_{\infty} \leq 4 D N \}$, where $D = \max \{ \|A_k^{-1} \| : k=1, ..., m  \}$. Now, we decompose $\mathbb{Z}^n = \left( \bigcup_{k=1}^m Q^{*}_k \right) \cup R$, where $R = \left( \bigcup_{k=1}^m Q^{*}_k \right)^c$. So,
\[
\sum_{j \in \mathbb{Z}^n} |(T_{\alpha, m} a)(j)|^q \leq \sum_{k=1}^m \sum_{j \in Q^{*}_k} |(T_{\alpha, m} a)(j)|^q +
\sum_{j \in R} |(T_{\alpha, m} a)(j)|^q = I_1 + I_2.
\]
To estimate $I_1$ we take $\frac{n}{n-\alpha} < q_0 < \infty$ and put $\frac{1}{p_0} = \frac{1}{q_0} + \frac{\alpha}{n}$. By H\"older inequality applied with $q_0/q$ and Theorem \ref{lplq}, we obtain
\begin{equation} \label{estim I1}
I_1 \leq \| T_{\alpha, m} a \|_{\ell^{q_0}}^q \sum_{k=1}^m (\# Q^{*}_k)^{1-q/q_0} \leq C \| a \|_{\ell^{p_0}}^q 
\sum_{k=1}^m (\# Q^{*}_k)^{1-q/q_0}
\end{equation}
\[
\leq C (\# Q)^{-q/p} (\# Q)^{q/p_0} \sum_{k=1}^m (\# Q^{*}_k)^{1-q/q_0} \leq C,
\]
where $C$ does not depend on the $p$-atom $a$.

Now, we proceed to estimate $I_2$. By Lemma \ref{estim on Rl}, we have
\begin{equation} \label{estimate I2}
I_2 \leq C \|a \|_{\ell^{\infty}}^q \sum_{l=1}^m \sum_{j \in \mathbb{Z}^n} 
\left( M_{\frac{\alpha n}{n+d_p+1}}(\chi_Q)(A_l \, j) \right)^{q\frac{n+d_p+1}{n}}.
\end{equation}
Since $A_l(\mathbb{Z}^n) \subset \mathbb{Z}^n$ for every $l=1, ..., m$ and the $A_l$'s are invertible, it follows that
\begin{equation} \label{reo}
\sum_{j \in \mathbb{Z}^n} 
\left( M_{\frac{\alpha n}{n+d_p+1}}(\chi_Q)(A_l \, j) \right)^{q\frac{n+d_p+1}{n}} \leq \sum_{j \in \mathbb{Z}^n} 
\left( M_{\frac{\alpha n}{n+d_p+1}}(\chi_Q)(j) \right)^{q\frac{n+d_p+1}{n}}.
\end{equation}
By taking into account that $d_p= \lfloor n(\frac{1}{p}-1) \rfloor$, we have $q \frac{n+d+1}{n} > p \frac{n+d+1}{n} > 1$. Then, we write 
$\widetilde{q} = q \frac{n+d+1}{n}$ and let $\frac{1}{\widetilde{p}} = \frac{1}{\widetilde{q}} + \frac{\alpha}{n+d+1}$, so 
$1 < \widetilde{p} < \widetilde{q} < \infty$ and $\widetilde{p}/\widetilde{q} = p/q$. Then, Proposition \ref{fract max} leads to
\[
\sum_{j \in \mathbb{Z}^n} \left( M_{\frac{\alpha n}{n+d_p+1}}(\chi_Q)(j) \right)^{q\frac{n+d_p+1}{n}} 
\leq C \left( \sum_{j \in \mathbb{Z}^n} \chi_Q (j) \right)^{q/p} = C (\# Q)^{q/p}.
\]
This inequality, (\ref{reo}) and (\ref{estimate I2}) give
\begin{equation} \label{estim I2}
I_2 \leq C \| a \|_{\ell^{\infty}}^q (\# Q)^{q/p} = C, 
\end{equation}
where $C$ is independent of the $p$-atom $a$. Now, (\ref{estim I1}) and (\ref{estim I2}) allow us to obtain (\ref{uniform estim}).

Given $b \in H^p(\mathbb{Z}^n)$, by Theorem \ref{atomic Hp}, we can write $b = \sum \lambda_k a_k$ where the $a_k$'s are discrete
$(p, \infty, d_p)$ atoms and the scalars $\lambda_k$ satisfies $\sum_{k} |\lambda_k |^{p} \leq C \| b \|_{H^{p}(\mathbb{Z}^{n})}$. By Theorem \ref{lplq} applied with $\frac{n}{n - \alpha} < q_0 < \infty$ and $\frac{1}{p_0} = \frac{1}{q_0} + \frac{\alpha}{n}$ and since 
$b = \sum_k \lambda_k a_k$ converges in $\ell^{p_0}(\mathbb{Z}^{n})$, we have that
\begin{equation} \label{pointwise}
|(T_{\alpha, m}b)(j)| \leq \sum_{k=1}^{\infty} |\lambda_k| |(T_{\alpha, m}a_k)(j)|, \,\,\,\,\, \text{for all} \,\, j \in \mathbb{Z}^{n}. 
\end{equation}
Finally, (\ref{uniform estim}) and (\ref{pointwise}) allows us to obtain
\[
\|T_{\alpha, m} b \|_{\ell^{q}(\mathbb{Z}^{n})} \leq C \left( \sum_{k} |\lambda_k|^{\min\{1, q \}} \right)^{\frac{1}{\min\{1, q \}}} \leq C \left( \sum_{k} |\lambda_k |^{p} \right)^{1/p} \leq C \| b \|_{H^{p}(\mathbb{Z}^{n})}.
\]
Thus the proof is concluded.
\end{proof}

In the following corollary we recover Theorem 3.3 obtained in \cite{Rocha2}.

\begin{corollary}
For $0 < \alpha < n$, let $I_{\alpha}$ be the discrete Riesz potential given by (\ref{Riesz potential}). Then, for
$0 < p \leq 1$ and $\frac{1}{q} = \frac{1}{p} - \frac{\alpha}{n}$
\[
\| I_{\alpha} \, b \|_{\ell^{q}(\mathbb{Z}^n)} \leq C \| b \|_{H^{p}(\mathbb{Z}^n)},
\]
where $C$ does not depend on $b$.
\end{corollary}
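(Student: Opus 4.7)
The plan is to follow the standard $H^p$-atomic template: via Theorem \ref{atomic Hp}, reduce the full statement to a uniform estimate $\|T_{\alpha,m} a\|_{\ell^q} \leq C$ valid for every $(p,\infty,d_p)$-atom $a$. Fix such an atom supported on $Q = \{i : |i - i_0|_\infty \leq N\}$, set $D = \max_k \|A_k^{-1}\|$, and define the enlarged sets $Q_k^* = \{i : |i - A_k^{-1} i_0|_\infty \leq 4DN\}$ with complement $R = \mathbb{Z}^n \setminus \bigcup_k Q_k^*$. Splitting
\[
\|T_{\alpha,m} a\|_{\ell^q}^q = \sum_{k=1}^{m} \sum_{j \in Q_k^*} |T_{\alpha,m} a(j)|^q + \sum_{j \in R} |T_{\alpha,m} a(j)|^q =: I_1 + I_2
\]
separates a local contribution around the ``affine centers'' $A_k^{-1} i_0$ from a far-field tail, and only the latter will require the vanishing moments of $a$.

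For the local piece $I_1$, I would pick an auxiliary exponent $q_0 \in (n/(n-\alpha), \infty)$ with $1/p_0 = 1/q_0 + \alpha/n$, apply H\"older's inequality with exponent $q_0/q > 1$ on each $Q_k^*$, and invoke Theorem \ref{lplq} to obtain $\|T_{\alpha,m} a\|_{\ell^{q_0}} \leq C \|a\|_{\ell^{p_0}}$. Since $\|a\|_{\ell^{p_0}} \leq (\#Q)^{1/p_0 - 1/p}$ by the atom size condition and $\#Q_k^* \leq C \, \#Q$, the identity $1/p - 1/q = \alpha/n$ makes all powers of $\#Q$ cancel and yields $I_1 \leq C$. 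For the far piece $I_2$, I would apply Lemma \ref{estim on Rl} to dominate $|T_{\alpha,m} a(j)|$ on $R$ by a constant multiple of $\|a\|_{\ell^\infty}$ times $(M_{\alpha n/(n+d_p+1)} \chi_Q(A_l j))^{(n+d_p+1)/n}$ on each $R_l$. Using $A_l \mathbb{Z}^n \subset \mathbb{Z}^n$ to drop the $A_l$, then setting $\tilde q = q(n + d_p + 1)/n$ and $1/\tilde p = 1/\tilde q + \alpha/(n + d_p + 1)$, the definition $d_p = \lfloor n(p^{-1} - 1) \rfloor$ gives $n + d_p + 1 > n/p$, which forces $\tilde p > 1$ and places the pair $(\tilde p, \tilde q)$ in the strong-type range of Proposition \ref{fract max}. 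The resulting bound $\|M_{\alpha n/(n+d_p+1)} \chi_Q\|_{\ell^{\tilde q}}^{\tilde q} \leq C(\#Q)^{\tilde q / \tilde p} = C(\#Q)^{q/p}$ combines with $\|a\|_{\ell^\infty}^q \leq (\#Q)^{-q/p}$ to yield $I_2 \leq C$.

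To finish, given $b = \sum_k \lambda_k a_k$ with $\sum_k |\lambda_k|^p \leq C \|b\|_{H^p}^p$, I would first validate the pointwise inequality $|T_{\alpha,m} b(j)| \leq \sum_k |\lambda_k|\, |T_{\alpha,m} a_k(j)|$, which is legitimate because the atomic series converges in some $\ell^{p_0}$ with $p_0 > 1$ where $T_{\alpha,m}$ is bounded by Theorem \ref{lplq}. Combining the uniform atomic estimate with the quasi-triangle inequality $\|\sum_k f_k\|_{\ell^q}^{\min(1,q)} \leq \sum_k \|f_k\|_{\ell^q}^{\min(1,q)}$ and the embedding $\ell^p \hookrightarrow \ell^{\min(1,q)}$ (valid since $p \leq 1$ and $p \leq q$) closes the argument. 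The main obstacle I anticipate is the exponent arithmetic in $I_2$: the number of vanishing moments $d_p$ built into the atom definition must be exactly what is needed so that the pair $(\tilde p, \tilde q)$ produced by Lemma \ref{estim on Rl} lies in the admissible range $\tilde p > 1$ of Proposition \ref{fract max}. This is the single place where the regularity of atoms is decisively exploited, and an off-by-one there would leave the tail uncontrollable.
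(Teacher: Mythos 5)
Your argument is correct and reproduces, step for step, the paper's proof of Theorem \ref{Hplq}; the paper's own proof of this corollary is simply the observation that $I_{\alpha} = T_{\alpha, 1}$ with $m=1$ and $A_1 = Id$, so the statement is an immediate specialization of that already-proved theorem. Nothing in your reasoning is wrong --- you have just re-derived the general result instead of invoking it.
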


\begin{proof}
To apply Theorem \ref{Hplq} with $0 < \alpha < n$, $m=1$ and $A_1 = Id$.
\end{proof}

\begin{remark}
Let $0 \leq \alpha < n$. If $0 < q \leq \frac{n}{n-\alpha}$ and $0 < p \leq \frac{nq}{n + \alpha q}$, then the operator $T_{\alpha, m}$ is bounded from $H^{p}(\mathbb{Z}^n)$ into $\ell^{q}(\mathbb{Z}^n)$. This follows from Theorem \ref{Hplq} and the embedding 
$H^{p_1}(\mathbb{Z}^n) \hookrightarrow H^{p_2}(\mathbb{Z}^n)$ valid for $0 < p_1 < p_2 \leq 1$. In particular, for $0 < \alpha < n$,
$0 < q \leq \frac{n}{n-\gamma}$ and $0 < p \leq \frac{nq}{n + \alpha q}$, the discrete Riesz potential $I_{\alpha}$ is bounded 
from $H^{p}(\mathbb{Z}^n)$ into $\ell^{q}(\mathbb{Z}^n)$.
\end{remark}

%%%%%%%%%%%% Authors' addresses %%%%%%%%%%%%%
\address{% First Author
Departamento de Matem\'atica \\ 
Universidad Nacional del Sur (UNS) \\
Bah\'{\i}a Blanca, Argentina}
{pablo.rocha@uns.edu.ar}

\end{document}